\documentclass[oneside,english,reqno]{amsart}
\usepackage{algorithm}
\usepackage{algorithmic}
\usepackage{multirow}
\usepackage{subfigure}
\usepackage{enumerate}

\usepackage[T1]{fontenc}
\usepackage[latin9]{inputenc}
\setcounter{tocdepth}{1}
\usepackage{babel}

\usepackage{amsthm}
\usepackage{amssymb}
\usepackage{graphicx}
\usepackage[unicode=true,pdfusetitle,
bookmarks=true,bookmarksnumbered=false,bookmarksopen=false,
breaklinks=false,pdfborder={0 0 1},backref=false,
hidelinks]
{hyperref}

\makeatletter
\numberwithin{equation}{section}
\numberwithin{figure}{section}

\makeatother

\newtheorem{theorem}{Theorem}[section]

\newtheorem{definition}[theorem]{Definition}
\newtheorem{example}[theorem]{Example}

\newtheorem{lemma}{Lemma}[section]

\newtheorem{proposition}[theorem]{Proposition}
\newtheorem{remark}[theorem]{Remark}

\newcommand{\gph}{\text{gph}\,}
\newcommand{\hilbertH}{ \mathcal{H}}
\newcommand{\hilbertG}{ \mathcal{G}}
\newcommand{\dist}{\text{dist}}
\newcommand{\rank}{\text{rank}\,}

\newcommand{\setD}{\mathcal{D}}
\usepackage{xcolor}
\begin{document}

\title[On Lipschitz-like continuity]{On Lipschitz-like continuity of a class of set-valued mappings
}

	\subjclass[2010]{41A50, 46C05, 49K27, 52A07, 90C31.}


%
%
%
%
%
\keywords{	set-valued mappings, parametric optimization, relaxed constant rank constraint qualification, R-regularity, pseudo-Lipschitz continuity, Lipschitz-like continuity, Aubin property}
\author{
	Ewa M. Bednarczuk$^1$
}
\author{
	Leonid I. Minchenko$^2$
}
\author{
	Krzysztof E. Rutkowski$^3$ 
}
\thanks{$^1$ Systems Research Institute of the Polish Academy of Sciences, Warsaw University of Technology
}
\thanks{$^2$ Belarus State University of Informatics and Radioelectronics, Minsk, Belarus	 
	}
\thanks{$^3$ Warsaw University of Technology, Systems Research Institute of the Polish Academy of Sciences 		 
	}

\begin{abstract}
We study set-valued mappings defined by solution sets of parametric systems of equalities and inequalities. We prove Lipschitz-like continuity of these mappings under relaxed constant rank constraint qualification. 
\end{abstract}
\maketitle




%

\section{ Introduction}

Properties of set-valued mappings given by  systems of equalities and inequalities play a significant role in parametric optimization. In particular, considerable effort is directed towards formulating  conditions ensuring Lipschitz-type continuities of these mappings, namely their calmness and the pseudo-Lipschitz continuity (also referred to as Lipschitz-like continuity or the Aubin property) \cite{MR736641,MR3022301,MR751246,MR3341662,MR2801389}.

The present paper is devoted to  sufficient conditions for R-regularity (Definition \ref{definition_R_regularity}) and pseudo-Lipschitz continuity for set-valued mappings defined by solution sets of parametric constrained systems.  R-regularity is a variant of a much more general property, called metric regularity, intensively studied in \cite{MR2445245,MR2548956,MR3288139,MR1777352,MR3727108,MR3575646}.

Let $\hilbertH$ be a Hilbert space and $\hilbertG$ be a normed space.
Let us consider a parametric nonlinear programming problem:
\begin{equation}\label{eq:1.1}
\begin{array}{ll}
\text{minimize}& f(p,x)\\
\text{subject to} & x \in F(p) = \{ x \in \hilbertH \mid {h_i}(p,x) \le 0,\ i \in I,\ h_i(p,x) = 0,\ i \in I_0\}, 
\end{array}
\end{equation}
where $p \in \hilbertG$ is a parameter, $x\in \hilbertH$ stands for the decision variable, $I = \{ 1,\dots,m\} $, ${I_0} = \{ m + 1,\dots,n\} $ (we admit the case $I_0=\emptyset$). Functions $f,{h_i}:\ \hilbertG\times \hilbertH\rightarrow\mathbb{R}$, $i=1,\dots, n$, are assumed to be (jointly) continuous together with their partial gradients with respect to $x$,  ${\nabla _x}f$ and ${\nabla _x}{h_i}$, $ i=1,\dots, n$.

In the present paper we prove Lipschitz-likeness of the set-valued mapping $F$ defined in \eqref{eq:1.1}. We generalize results from \cite{MR2783218} and \cite{2018arXiv181105166B}. In \cite{MR2783218} the respective results are obtained under stronger assumptions of functions $h_i$, $i\in I_0\cup I$, while in \cite{2018arXiv181105166B} the Lipschitz-likeness of $F$ is obtained for $h_i(p,x)=\langle x \mid g_i(p)\rangle -f_i(p)$, $i\in I_0\cup I$, where $f_i:\ \hilbertG \rightarrow \mathbb{R}$, $g_i:\ \hilbertG  \rightarrow \hilbertH$, $i\in I_0\cup I$, are locally Lipschitz functions.
We also correct the mistake in the proof of Lemma 3 of \cite{MR2783218}.

For set-valued mapping $F:\ \hilbertG \rightrightarrows \hilbertH$ defined in \eqref{eq:1.1}, its domain and graph are defined by  $domF = \{ p \in \hilbertG \mid F(p) \ne \emptyset \} $ and $grF = \{ (p,x)|x \in F(p),\ p \in \hilbertG\} $, respectively.

The tangent cone $T(F(p),x)$ and the linearized  cone $\Gamma(F(p),x)$ to $F(p)$ at $x \in F(p)$ are defined, respectively, as follows
\begin{align*}
&T(F(p),x) = \{  d \in \hilbertH \mid \exists {t_k} \downarrow 0,{ d^k} \to d\text{ such that }x + {t_k}d^k \in F(p),\ k = 1,2,\dots\} ,\\
&\Gamma (F(p),x) = \{ d \in \hilbertH\mid\langle {\nabla _x}{h_i}(p,x),d\rangle  \le 0:i \in I(p,x),\ \langle {\nabla _x}{h_i}(p,x),d\rangle  = 0:i \in {I_0}\}.
\end{align*}

Organization of the paper is as follows. In section 2 we provide basing concepts. Section 3 is devoted to the concept of $R-$regularity. In section 4 we investigate relationships between relaxed constant rank condition and the R-regularity of $F$. In section 5 we prove Lipschitz-likeness of $F$ under relaxed constant rank condition. In section 6 some applications to bilevel programming are discussed.
\section{Basic concepts and definitions}
This section contains some background material (see, i.e., \cite{MR736641,MR2179245,MR3341662,MR1946832,MR2801389,MR1491362}) 
which will be used in the sequel. 

We denote ${V_\delta }({p^0}) = {p^0} + \delta B$, ${V_\varepsilon }({x^0}) = {x^0} + \varepsilon B$,  where $B$ is the open unit ball centered at $0$ in the respective space, $\dist(v,C):=\inf\{ \|v-c\|,\ c\in C\}$ is the distance between  point $v$ and  set $C$,  where $\| v \|$  is the norm of  vector $v$.  

\begin{definition} A set-valued mapping $F$ is lower semicontinuous (lsc) at  $({p^0},{x^0}) \in grF$  (relative to $P \subset \hilbertG$) if  for any neighbourhood $V({x^0})$ there is a neighborhood $V({p^0})$ such that $F(p) \cap V({x^0}) \ne \emptyset $ for all $p \in V({p^0})$ (for all $p \in V({p^0}) \cap P$).
\end{definition}

\begin{definition} A set-valued mapping $F$ is  lower Lipschitz  continuous at  $({p^0},{x^0}) \in grF$ (relative to $P \subset \hilbertG$) if there exist positive numbers  $l$ and $\delta $ such that
	\begin{equation}\label{eq:1.2}
	\dist({x^0},F(p)) \le l\| {p - {p^0}} \|\quad \forall p \in {V_\delta }({p^0})\quad     (\forall p \in {V_\delta }({p^0}) \cap P).
	\end{equation}
\end{definition}
Note that \eqref{eq:1.2} implies that  $F(p) \cap {V_\varepsilon }({x^0}) \ne \emptyset $ for any $\varepsilon  > 0$, if $\| {p - {p^0}} \| < \min \{ \delta ,\varepsilon /l\} $.

\begin{definition}   A set-valued mapping $F$ is Lipschitz-like (pseudo-Lipschitzian) (relative to $P \subset \hilbertG$) at  $({p^0},{x^0}) \in grF$ (where ${p^0} \in P$)  if there exist a number ${l_F} > 0$  and  neighbourhoods $V({p^0})$ and $V({x^0})$ such that   $$
	F({p^1}) \cap V({x^0}) \subset F({p^2}) + {l_F}\| {{p^2} - {p^1}} \|B
	$$                                                                                 
	for all  ${p^1},{p^2} \in V({p^0})$ (${p^1},{p^2} \in V({p^0}) \cap P$).
\end{definition}

Let $P \subset \hilbertG$, $X \subset \hilbertH$  and $I(p,x): = \{ i \in I\ | \ h_i(p,x) = 0\}$ be the set of indices of active inequality constraints at $(p,x)\in grF$ . Following \cite{MR2801389,MR2783218} 
we define the relaxed constant rank constraint qualification (RCRCQ) which generalizes the  constant rank constraint qualification   introduced by Janin \cite{MR751246}.

\begin{definition}\label{def:4} The set-valued mapping $F$ satisfies the Relaxed Constant Rank Constraint Qualification, or shortly, RCRCQ (relative to $P \times X$) at  $({p^0},{x^0}) \in grF$,  if for any index set $K \subset I(p{^0},{x^0})$ 
	$$
	\rank\{ {\nabla _x}{h_i}(p,x):{\rm{  }}i \in {I_0} \cup K\} =
	\rank\{ {\nabla _x}{h_i}(p^0,x^0):{\rm{  }}i \in {I_0} \cup K\}
	$$ 
	in a neighbourhood of $(p{^0},{x^0})$ (for $(p,x) \in P \times X$ from this neighbourhood). 
	
	The set $F(p^0)$ satisfies RCRCQ at  $x^0 \in F(p^0)$  if for any index set $K \subset I(p^0,x^0)$ 
	$$
	\rank\{ {\nabla _x}{h_i}(p^0,x):{\rm{  }}i \in {I_0} \cup K\} =
	\rank\{ {\nabla _x}{h_i}(p^0,x^0):{\rm{  }}i \in {I_0} \cup K\}
	$$ 
	for all $x$ in a neighbourhood of $x^0$. 
\end{definition}
Clearly, if the set-valued mapping $F$ satisfies RCRCQ at  $({p^0},{x^0}) \in grF$, then it satisfies RCRCQ at all points $(p,x) \in gr\, F$ in some neighbourhood of $({p^0},{x^0})$. 

The following lemma proves the equality  $\Gamma (F(p),x) = T(F(p),x)$ under RCRCQ. In the finite dimensional case, where $\hilbertH=\mathbb{R}^s$ this fact has been proved in Theorem 1 of \cite{MR2801389}. In the infinite-dimensional case considered in the present paper this fact has been proved in Theorem 6.3 of \cite{on_tangent_cone_Bednarczuk}.

\begin{lemma}\label{lemma:1.1}(\cite{on_tangent_cone_Bednarczuk,MR2801389}).    Let the set-valued mapping $F$ satisfy RCRCQ (relative to $dom F \times \hilbertH$) at   $({p^0},{x^0}) \in grF$. Then there exist neighborhoods $V({p^0})$ and $V({x^0})$ such that $\Gamma (F(p),x) = T(F(p),x)$  for all $x \in F(p) \cap V({x^0})$ and all $p \in V({p^0}) \cap domF$. 
\end{lemma}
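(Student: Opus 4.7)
The plan is to establish the two inclusions separately, with $T(F(p),x)\subseteq\Gamma(F(p),x)$ following from first-order Taylor expansion without any appeal to RCRCQ, and $\Gamma(F(p),x)\subseteq T(F(p),x)$ requiring the full constant-rank machinery. For the easy direction, given $d\in T(F(p),x)$ with $t_k\downarrow 0$ and $d^k\to d$ such that $x+t_k d^k\in F(p)$, a first-order Taylor expansion of $h_i(p,\cdot)$ at $x$ combined with $h_i(p,x+t_k d^k)\le 0$ for $i\in I(p,x)$ (respectively $h_i(p,x+t_k d^k)=0$ for $i\in I_0$) and division by $t_k$ yields the defining linearized (in)equalities; I would simply record this.

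For $\Gamma(F(p),x)\subseteq T(F(p),x)$, I would fix $(p,x)$ in a suitable neighborhood of $(p^0,x^0)$ with $p\in\text{dom}\,F$ and $x\in F(p)$, and take $d\in\Gamma(F(p),x)$. The key step is choosing the right linearization index set: let $I_{=}(d):=I_0\cup\{i\in I(p,x):\langle\nabla_x h_i(p,x),d\rangle=0\}$ and select $J\subseteq I_{=}(d)$ so that $\{\nabla_x h_i(p,x):i\in J\}$ is a basis of the span of $\{\nabla_x h_i(p,x):i\in I_{=}(d)\}$. Since $(p,x)$ is close to $(p^0,x^0)$ one has $I(p,x)\subseteq I(p^0,x^0)$ by continuity of inactive inequalities, so applying RCRCQ with $K=I_{=}(d)\cap I(p,x)$ and shrinking the neighborhoods of $(p^0,x^0)$, the family $\{\nabla_x h_i:i\in J\}$ remains linearly independent and still spans $\{\nabla_x h_i:i\in I_{=}(d)\}$ throughout. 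The implicit function theorem in the Hilbert space $\hilbertH$ then produces $u(t)=o(t)$ solving $h_i(p,x+td+u(t))=0$ for $i\in J$.

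To verify $x(t):=x+td+u(t)\in F(p)$ for small $t>0$, I would argue case by case. For $i\in J$ the relevant (in)equality is imposed by construction. For $i\in I_{=}(d)\setminus J$ the gradient $\nabla_x h_i(p,x')$ lies in $\text{span}\{\nabla_x h_k(p,x'):k\in J\}$ on the whole RCRCQ neighborhood, so the restriction of $h_i(p,\cdot)$ to the submanifold $M_p:=\{x':h_k(p,x')=0,\ k\in J\}$ has identically vanishing intrinsic derivative, hence is constant and equal to $h_i(p,x)=0$; since $x(t)\in M_p$ this gives the needed (in)equality. For $i\in I(p,x)\setminus I_{=}(d)$ a direct Taylor expansion yields $h_i(p,x(t))=t\langle\nabla_x h_i(p,x),d\rangle+o(t)<0$, and inactive inequalities remain strictly negative by continuity. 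Then $d^k:=k(x(1/k)-x)\to d$ with $x+(1/k)d^k=x(1/k)\in F(p)$ exhibits $d\in T(F(p),x)$.

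The main obstacle --- and the step where RCRCQ is truly indispensable rather than just linear independence at $(p^0,x^0)$ --- is propagating the spanning relation $\nabla_x h_i(p,x')\in\text{span}\{\nabla_x h_k(p,x'):k\in J\}$ to a full neighborhood, which is what allows $h_i(p,\cdot)$ to be \emph{constant} along $M_p$ rather than merely vanishing to first order at $x$. The complete technical argument, including the careful choice of neighborhoods and the implicit function step in infinite dimensions, is carried out in Theorem~1 of \cite{MR2801389} for the finite-dimensional case and in Theorem~6.3 of \cite{on_tangent_cone_Bednarczuk} for the Hilbert-space setting invoked here.
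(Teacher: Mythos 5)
Your proposal is correct, and at the structural level it follows the same route as the paper: the paper's entire proof consists of the observation that RCRCQ at $(p^0,x^0)$ persists at all nearby points $(p,x)$ of the graph (so that each set $F(p)$ satisfies RCRCQ at each nearby $x\in F(p)$) followed by a direct appeal to Theorem~6.3 of \cite{on_tangent_cone_Bednarczuk}, which is exactly the reduction you perform before citing the same result. The difference is that you additionally unpack the internal argument of the cited theorem: the Taylor-expansion proof of $T\subseteq\Gamma$, and for the converse the basis selection $J\subseteq I_{=}(d)$, the implicit-function construction of $u(t)=o(t)$, and the constant-rank argument forcing $h_i(p,\cdot)$, $i\in I_{=}(d)\setminus J$, to be constant along the manifold $\{h_k(p,\cdot)=0,\ k\in J\}$. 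That sketch is sound; the one point worth making explicit is that RCRCQ only controls ranks of families indexed by sets of the form $I_0\cup K$, and your $J$ need not contain $I_0$ --- the spanning relation for $x'$ near $x$ follows because $\{\nabla_x h_k(p,x'):k\in J\}$ stays linearly independent by continuity and has cardinality equal to the (constant) rank of $\{\nabla_x h_k(p,x'):k\in I_0\cup(I_{=}(d)\cap I)\}$, hence remains a basis of that span; your wording indicates you have this in mind. In short, your version proves strictly more than the paper writes down, and what the paper's terse proof buys is only brevity by outsourcing everything past the localization step to the reference.
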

\begin{proof} As already noted, if $F$ satisfies RCRCQ at $(p^0,x^0)$, there are neighbourhoods $V({p^0})$ and $V({x^0})$ such that $F$ satisfies RCRCQ at any point $(p,x)$ where $p \in V({p^0}) \cap domF$, $x \in F(p) \cap V({x^0})$. Hence, the set $F(p)$ satisfies  RCRCQ at $x$ and by Theorem 6.3 of \cite{on_tangent_cone_Bednarczuk}, 
	$\Gamma (F(p),x) = T(F(p),x)$.
\end{proof}
Following \cite{fedorov1979numerical,MR1946832} 
we define the  R-regularity of set-valued mappings.

\begin{definition}
	\label{definition_R_regularity}
	The set-valued mapping $F$ is R-regular at  $({p^0},{x^0}) \in grF$ (relative to $P \subset \hilbertG$)  if there exist a number $M > 0$ and neighbourhoods $V({p^0})$ and  $V({x^0})$ such that 
	\begin{align}
	\begin{aligned}\label{r-regularity}
	&\dist(x,F(p)) \le M\max \{ 0,{h_i}(p,x),\ i \in I, \ | {{h_i}(p,x)} |,\ i \in {I_0}\}\\
	&\text{for all}\ x \in V({x^0})\ \text{and}\  p \in V({p^0})\quad (p \in V({p^0}) \cap P).
	\end{aligned}
	\end{align}                                                                                  
\end{definition}

The concept of R-regularity appears in different works (see e.g. Theorem 2.84 and formula (2.164) of \cite{Bonnans_Shapiro}, formula (10) of \cite{aubin_criterion_for_metric_regularity}). When $h_i(x,p)=g_i(x)-p_i$, $g_i(p):\ \hilbertG \rightarrow \mathbb{R}$, $p_i\in \mathbb{R}$ $i\in I\cup I_0$ the R-regularity is equivalent to the metric regularity of $G(x,p):=[g_i(x)-p_i]_{i\in I\cup I_0}  -K$, where $K=\mathbb
{R}_{-}^{m}\times \{0\}^{n-m}$ (see formulas (2.143), (2.144) of \cite{Bonnans_Shapiro}). In the paper \cite{Kruger2015}, some variants of \eqref{definition_R_regularity} have been investigated (see e.g. formula (6) of \cite{Kruger2015}).

\section{Criterion of R-regularity}
Let  $v \in \hilbertH$ and  $\Pi_{F(p)}(v):=\{\tilde{v}\in F(p)\mid \|\tilde{v}-v\|=\dist(v, F(p)) \}$.  The set $\Pi _{F(p)}(v)$  is the  solution set to the problem 
\begin{equation}\label{problem:P1}\begin{array}{ll}
\text{minimize}& {f_v}(p,x) = \| {x - v} \|\\
\text{subject to} & x \in F(p).
\end{array}
\end{equation}
The problem \eqref{problem:P1} can be  equivalently reformulated as
\begin{equation}\label{problem:P2}\begin{array}{ll}
\text{minimize}& \frac{1}{2}\| {x - v} \|^2\\
\text{subject to} & x \in F(p).
\end{array}
\end{equation}
Lagrange multiplier sets for problem \eqref{problem:P1} are defined as follows
\begin{align*}
&\Lambda_v(p,x):=\{ \lambda\in \mathbb{R}^n \mid \frac{x-v}{\|x-v\|}+\sum_{i=1}^{n} \lambda_i \nabla_x h_i(p,x)=0,\ \lambda_i\geq 0,   \lambda_i h_i(p,u)=0\ \text{for}\ i\in I\}, \\
&\Lambda_v^M(p,x):=\{\lambda \in \Lambda_v(p,x) \mid \sum_{i=1}^{n} |\lambda_i|\leq M  \}.
\end{align*}
\begin{lemma}\label{lemma:multipliers} (Proposition 7.1 of \cite{on_tangent_cone_Bednarczuk}, Theorem 1 of \cite{MR2801389}). Suppose that $x^0\in \hilbertH$ is a solution to \eqref{problem:P1} for $p=p^0\in \hilbertG$ and $v\in \hilbertH$. Assume the set-valued mapping $F$ satisfies RCRCQ (relative to $dom F \times \hilbertH$) at   $({p^0},{x^0}) \in grF$. Then $\Lambda_v(p^0,x^0)\neq \emptyset$. 
\end{lemma}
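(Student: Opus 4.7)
The plan is to derive the multiplier existence from a first-order necessary optimality condition, converting the geometric condition (the negative gradient of the objective lies in the normal cone to the feasible set) into an algebraic one via Lemma \ref{lemma:1.1} and a Farkas-type argument. First I would dispose of the trivial case $v=x^0$, in which the zero multiplier vector works, and then assume $v\neq x^0$, so that the objective $f_v(p^0,\cdot)=\|\cdot-v\|$ is Fr\'echet differentiable at $x^0$ with $\nabla_x f_v(p^0,x^0)=(x^0-v)/\|x^0-v\|$.

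Next, since $x^0$ minimizes $f_v(p^0,\cdot)$ over $F(p^0)$, the standard first-order necessary condition on tangent directions gives
\[
\bigl\langle (x^0-v)/\|x^0-v\|,\,d\bigr\rangle \ge 0\quad\text{for all } d\in T(F(p^0),x^0).
\]
By Lemma \ref{lemma:1.1}, the assumption of RCRCQ at $(p^0,x^0)$ yields $T(F(p^0),x^0)=\Gamma(F(p^0),x^0)$, so the above inequality in fact holds over the linearized cone $\Gamma(F(p^0),x^0)$, which is described by the finite system
\[
\langle\nabla_x h_i(p^0,x^0),d\rangle\le 0,\ i\in I(p^0,x^0),\qquad \langle\nabla_x h_i(p^0,x^0),d\rangle=0,\ i\in I_0.
\]

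The final step is to apply the generalized Farkas lemma to this finite linear system in the Hilbert space $\hilbertH$: the polar of $\Gamma(F(p^0),x^0)$ is the convex cone finitely generated by $\{\nabla_x h_i(p^0,x^0):i\in I(p^0,x^0)\}$ together with $\{\pm\nabla_x h_i(p^0,x^0):i\in I_0\}$, and a finitely generated cone in a Hilbert space is automatically closed. Hence $-(x^0-v)/\|x^0-v\|$ admits a representation
\[
-(x^0-v)/\|x^0-v\| = \sum_{i\in I(p^0,x^0)}\lambda_i\,\nabla_x h_i(p^0,x^0)+\sum_{i\in I_0}\lambda_i\,\nabla_x h_i(p^0,x^0)
\]
with $\lambda_i\ge 0$ for $i\in I(p^0,x^0)$; setting $\lambda_i=0$ for $i\in I\setminus I(p^0,x^0)$ gives complementary slackness and produces the desired element of $\Lambda_v(p^0,x^0)$.

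The main potential obstacle is the Farkas step in the infinite-dimensional setting, where the polar of a linearly constrained cone need not in general be closed; this is circumvented precisely because only finitely many gradients $\nabla_x h_i(p^0,x^0)$ appear, so the generated cone is closed without further assumption. A minor additional care is required at $x^0=v$, where the normalized gradient is undefined; this is handled by the separate trivial case above, consistent with the references \cite{on_tangent_cone_Bednarczuk,MR2801389}.
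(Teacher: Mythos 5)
Your proof is correct and takes essentially the approach underlying the results the paper cites for this lemma (the paper gives no proof of its own, only the references): under RCRCQ, Lemma \ref{lemma:1.1} gives $T(F(p^0),x^0)=\Gamma(F(p^0),x^0)$, and the first-order optimality condition combined with the generalized Farkas lemma for the finitely generated, hence closed, cone $\mathrm{cone}\{\nabla_x h_i(p^0,x^0):i\in I(p^0,x^0)\}+\mathrm{span}\{\nabla_x h_i(p^0,x^0):i\in I_0\}$ produces the multipliers. The only caveat is the degenerate case $v=x^0$, where $\Lambda_v(p^0,x^0)$ is formally undefined because of the normalization $(x^0-v)/\|x^0-v\|$ (so ``the zero multiplier works'' is not literally meaningful there), but this case never arises in the paper's applications of the lemma, since there always $v\notin F(p^0)$.
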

The following theorem generalizes   Theorem 2 \cite{MR2783218} and Theorem 4.1 \cite{Guo2014} to parametric systems defined by the set-valued mapping $F$.  
\begin{theorem}\label{theorem:2.1} Let $({p^0},{x^0}) \in grF$ and  the set-valued mapping $F$ be l.s.c. at  $({p^0},{x^0})$ relative to $domF$.  The following assertions are equivalent:
	\begin{enumerate}
		\item[(a)] the set-valued mapping $F$ is R-regular at   $({p^0},{x^0})$ relative to $domF$;
		\item[(b)] there exists a number $M > 0$ such that  for any sequences ${p^k} \to {p^0},{p^k} \in domF,$   ${v^k} \to {x^0},{v^k} \notin F({p^k})$,  the inequality $\Lambda _{v_k}^M({p^k},{x^k}) \ne \emptyset $ holds for all 
		${x^k} = x({p^k},{v^k}) \in \Pi_{F({p^k})}({v^k})$ and $k$ sufficiently large.
	\end{enumerate}
	
\end{theorem}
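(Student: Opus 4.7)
My plan is to prove the equivalence by extracting, in each direction, a quantitative statement from the projection $x^k \in \Pi_{F(p^k)}(v^k)$ and translating it into the other condition. In both directions I first use lower semicontinuity of $F$ at $(p^0,x^0)$ to infer $\dist(v^k, F(p^k)) \to 0$ from $v^k \to x^0$, whence $\|v^k - x^k\| \to 0$ and $x^k \to x^0$; this keeps $(p^k, x^k)$ inside a fixed neighbourhood of $(p^0, x^0)$ on which joint continuity of $\nabla_x h_i$ renders the first-order Taylor remainders uniformly $o(\|v^k - x^k\|)$ in $i$. Throughout I abbreviate $\phi(p,x) := \max\{0,\, h_i(p,x)\ (i \in I),\, |h_i(p,x)|\ (i \in I_0)\}$.

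For the direction $(a)\Rightarrow(b)$ I would fix arbitrary sequences as in (b) and, for each direction $d \in \hilbertH$ and small $t > 0$, apply R-regularity at $(p^k, x^k + td)$ to pick $z^k_t \in F(p^k)$ with $\|z^k_t - x^k - td\| \le M\phi(p^k, x^k + td)$. The optimality of $x^k$ yields $\|x^k - v^k\| \le \|z^k_t - v^k\|$; combining this with the triangle inequality, using that $\phi(p^k, x^k + td) = t\max\{0,\langle \nabla_x h_i(p^k,x^k),d\rangle\ (i\in I(p^k,x^k)),|\langle \nabla_x h_i(p^k,x^k),d\rangle|\ (i\in I_0)\} + o(t)$ (inactive inequalities contribute nothing and active/equality constraints vanish at $x^k$), and then dividing by $t$ and letting $t \downarrow 0$, produces the pointwise inequality
\begin{equation*}
\Big\langle d, \frac{v^k - x^k}{\|v^k - x^k\|}\Big\rangle \le M \max\Big\{0,\ \langle \nabla_x h_i(p^k, x^k), d\rangle\ (i \in I(p^k,x^k)),\ |\langle \nabla_x h_i(p^k, x^k), d\rangle|\ (i \in I_0)\Big\}
\end{equation*}
for every $d \in \hilbertH$. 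The key observation is that the right-hand side is $M$ times the support function of the convex hull $C^k := \mathrm{conv}(\{0\} \cup \{\nabla_x h_i(p^k, x^k)\}_{i \in I(p^k, x^k)} \cup \{\pm\nabla_x h_i(p^k, x^k)\}_{i \in I_0})$, so $(v^k - x^k)/\|v^k - x^k\| \in M C^k$. Reading off the representing convex combination and relabelling its coefficients produces $\lambda^k$ satisfying the Lagrange identity, the sign and complementarity conditions for $i \in I$, and $\sum_i |\lambda^k_i| \le M$, that is, $\lambda^k \in \Lambda^M_{v^k}(p^k, x^k)$.

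For the converse $(b)\Rightarrow(a)$ I would argue by contradiction. If R-regularity fails, there exist $p^k \to p^0$, $v^k \to x^0$, $v^k \notin F(p^k)$, and $x^k \in \Pi_{F(p^k)}(v^k)$ with $\|v^k - x^k\| > k\phi(p^k, v^k)$. Lower semicontinuity forces $x^k \to x^0$, and (b) supplies $\lambda^k \in \Lambda^M_{v^k}(p^k, x^k)$ for large $k$. Taking the inner product of the Lagrange identity with $v^k - x^k$ produces
\begin{equation*}
\|v^k - x^k\| = \sum_{i=1}^n \lambda^k_i \langle \nabla_x h_i(p^k, x^k), v^k - x^k\rangle .
\end{equation*}
I would then replace each $\langle \nabla_x h_i(p^k, x^k), v^k - x^k\rangle$ by $h_i(p^k, v^k) - h_i(p^k, x^k) + o(\|v^k - x^k\|)$ via Taylor, annihilate the $h_i(p^k, x^k)$ terms through complementarity ($i \in I$) and the equality constraints ($i \in I_0$), and bound $|\lambda^k_i h_i(p^k, v^k)| \le |\lambda^k_i|\phi(p^k, v^k)$, obtaining $\|v^k - x^k\| \le M\phi(p^k, v^k) + o(\|v^k - x^k\|)$. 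For $k$ large this contradicts $\|v^k - x^k\| > k\phi(p^k, v^k)$.

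The main obstacle I anticipate is the support-function identification in $(a)\Rightarrow(b)$: the variational inequality delivered by R-regularity must be recognised as membership in the explicit convex hull $MC^k$ in order to extract a multiplier vector honouring the normalisation $\sum_i |\lambda^k_i| \le M$. The remainder is first-order Taylor book-keeping combined with complementarity and Hilbert-space projection properties.
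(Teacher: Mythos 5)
Your proof is correct, and for the direction $(b)\Rightarrow(a)$ it coincides with the paper's argument almost step for step: the same contradiction setup, the same pairing of the Lagrange identity with $v^k-x^k$, the same Taylor/complementarity bookkeeping leading to $\|v^k-x^k\|\le 2M\,\phi(p^k,v^k)$ and hence a contradiction with $\|v^k-x^k\|>k\,\phi(p^k,v^k)$. The genuine difference is in $(a)\Rightarrow(b)$: the paper does not prove this direction at all, deferring to ``a slight modification of the first part of the proof of Theorem 2'' of an external reference, whereas you supply a self-contained argument. Your route --- perturb $x^k$ to $x^k+td$, use R-regularity to produce a nearby feasible point, exploit optimality of the projection and Gâteaux differentiability of the Hilbert norm to obtain, after dividing by $t$, the variational inequality $\langle d,(v^k-x^k)/\|v^k-x^k\|\rangle\le M\,\sigma_{C^k}(d)$, and then identify the right-hand side as the support function of the compact convex hull $C^k$ so that strict separation forces $(v^k-x^k)/\|v^k-x^k\|\in MC^k$ --- is sound, and the resulting convex-combination coefficients do yield a multiplier in $\Lambda^M_{v^k}(p^k,x^k)$ with the correct signs, complementarity, and $\ell_1$-bound. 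Two minor points worth making explicit if you write this up: since $F(p^k)$ need not be convex or the distance attained in infinite dimensions, you should pick $z^k_t$ with $\|z^k_t-x^k-td\|\le M\phi(p^k,x^k+td)+t^2$ rather than asserting attainment (the extra $t^2$ vanishes after dividing by $t$); and the uniformity in $i$ of the Taylor remainders should be justified via the mean value theorem together with joint continuity of $\nabla_x h_i$, exactly as you indicate. What your approach buys is a complete, reference-free proof of the equivalence; what the paper's citation buys is brevity.
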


\begin{proof}If  ${x^0} \in {\mathop{\rm int}} F({p^0})$,  the theorem is obviously valid.   Assume that ${x^0} \in bdF({p^0})$.
	\begin{enumerate}
		\item[1)] The implication $(a) \Rightarrow (b)$ follows from a slight modification in the first part of the proof of Theorem 2 \cite{MR2783218}. 
		\item[2)]  $(b) \Rightarrow (a)$. On the contrary, suppose that $F$ is not R-regular relative  $domF$ at $({p^0},{x^0})$. Then there exist sequences ${p^k} \to {p^0},{p^k} \in domF,$ and  ${v^k} \to {x^0},{v^k} \notin F({p^k})$, such that for all $k = 1,2,\dots$
		\begin{equation}\label{eq:2.1}
		\dist({v^k},F({p^k})) > k\max \{ 0,h_i({p^k},{v^k}),\ i \in I,\ | {{h_i}({p^k},{v^k})} |,\ i \in {I_0}\} .     
		\end{equation}
		Take any ${x^k} \in \Pi_{F({p^k})}({v^k})$. Due to (b) there exists a vector ${\lambda ^k}$ such that  $\sum\limits_{i = 1}^n {\left| {\lambda _i^k} \right|}  \le M$ and
		\begin{equation}\label{eq:2.2}
		\begin{array}{l}
		\frac{x^k-v^k}{\|x^k-v^k\|}+\sum_{i=1}^{n} \lambda_i^k \nabla_xh_{i}(p^k,x^k)=0,\ \lambda_i^k \nabla_xh_{i}(p^k,x^k)=0,\\ \lambda_i^k\geq 0\ i \in I(p^k,x^k),\ \lambda_i^k=0\ i\in I\setminus I(p^k,x^k).
		\end{array}
		\end{equation}
		It follows from the lower semicontinuity of $F$ at $(p^0,x^0)$ that there exists a sequence ${q^k} \in F({p^k})$ such that ${q^k} \to {x^0}$. Then $\| {{v^k} - {x^k}} \| \le \| {{v^k} - {q^k}} \|$ and, therefore, ${x^k} \to {x^0}$. 
		
		In virtue of the boundedness of the sequence ${\lambda ^k}$ and of the condition $\lambda _i^k{h_i}({p^k},{x^k}) = 0$, for $i \in I$, by \eqref{eq:2.2}, for all sufficiently large  $k$ we obtain
		\begin{align*}
		&\|x^k-v^k\|=\langle \sum_{i=1}^{n} \lambda_i^k \nabla_x h_i(p^k,x^k),v^k-x^k\rangle\\
		& \leq \sum_{i=1}^n \lambda_i^k (h_i(p^k,v^k)-h_i(p^k,x^k)+o(\|v^k-x^k\|))=\\
		& = \sum\limits_{i = 1}^n {\lambda _i^k{h_i}({p^k},{v^k}) + \sum\limits_{i = 1}^n {\lambda _i^k} o(\| {{v^k} - {x^k}} \|)) \le } \sum\limits_{i = 1}^n {\lambda _i^k{h_i}({p^k},{v^k}) + } \frac{1}{2}\| {{v^k} - {x^k}} \|.
		\end{align*}
		
		The latter inequality implies	 
		$$\dist({v^k},F({p^k})) = \| {{x^k} - {v^k}} \| \le 2M\max \{ 0,{h_i}{({p^k},{v^k})},\ i \in I,\ | {{h_i}({p^k},{v^k})} |,\ i \in {I_0}\}, $$
		which contradicts  \eqref{eq:2.1}.    Thus $(b) \Rightarrow (a)$.    
	\end{enumerate}
\end{proof}
\begin{remark} Theorem \ref{theorem:2.1} is a generalization of Theorem 2 \cite{MR2783218}. Unlike Theorem 2 \cite{MR2783218} it does not require  Lipschitz continuity of gradients ${\nabla _x}{h_i}(p,x)$ for $i = 1,\dots,n$. Theorem \ref{theorem:2.1} considers  also more general notion of R-regularity relative $domF$.
\end{remark}
\begin{remark} As follows from the proof of Theorem 2 \cite{MR2783218} the implication $(a) \Rightarrow (b)$ holds without the assumptions of lower semicontinuity of $F$ at $({p^0},{x^0})$.
\end{remark}
\begin{remark}
	Let us note that Lemma 3 of \cite{MR2783218} is a consequence of Theorem \ref{theorem:2.1} and Theorem \eqref{theorem:3.1} below, which says that RCRCQ for $F$ at $(p^0,x^0)$ and lower Lipschitz continuiuty of $F$ at $(p^0,x^0)$ implies $R$-regularity of $F$ at $(p^0,x^0)$. 
	
	In \cite{2018arXiv181105166B} we discussed in details the proof of Lemma 3 of \cite{MR2783218} for some special functions $h_i$, $i=1,\dots,n$. The proof of Theorem \ref{theorem:2.1} together with Theorem \ref{theorem:3.1} fill some gaps in the proof of Lemma 3 of \cite{MR2783218}.
\end{remark}

The example below shows that the assertion of Theorem \ref{theorem:2.1} may not hold   if $F$ is not lsc at a point $({p^0},{x^0})$.

\begin{example} Let $F(p) = \{ x \in {\mathbb{R}^2}\mid  {\| {{x_1}} \| \le {{1,}}\ \| {{x_2}} \| \le {{1,}}\ {x_2} - p{x_1} + | p | + 1 \le 0\} }$, $p \in \mathbb{R}$.  
	Then $F(p) = \{ (1, - 1)\} $ for all $p > 0$,  $F(p) = \{ ( - 1, - 1)\} $ for $p < 0$ and $F(p) = \{ x \in \mathbb{R}^2 \mid  { - 1 \le {x_1}}  \le {1,\ }{x_2} =  - 1\} $ for $p = 0$.   Consider the point $({p^0},{x^0})$, where ${p^0} = 0$,  ${x^0} = (0, - 1)$. 
	
	Evidently, $F$ is not lsc at  $({p^0},{x^0})$.  Let us take $p = \varepsilon  > 0$, $v = (\varepsilon , - 1)$, where $\varepsilon  \to 0$. 
	It is easy to see that for given $v$ and $p$ the R-regularity condition does not hold if $\varepsilon $ is sufficiently small. 
\end{example}

The following technical observation will be used in the sequel.

\begin{proposition}\label{propostion:I_1-linearly_independent}
	Let $p^0\in \setD$. Assume that RCRCQ holds for the set-valued mapping $F$  given by \eqref{eq:1.1} at $(p^0,x^0)\in \gph F$ and $F(p)\neq \emptyset$ for $p\in V_0(p^0)$.  Then there exist neighbourhoods $V(p^0)$, $V(x^0)$ and an index set $I_0^\prime\subset I_0$, $|I_0^\prime|=\rank \{ \nabla_x h_i(p_0,x_0)\ i\in I_0 \}$ such that for all $(p,x)\in V(p^0)\times V(x^0)$ vectors
	$\nabla_x h_i(p,x)$, $i\in I_0^\prime$ are linearly independent. 
\end{proposition}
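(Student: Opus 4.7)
The plan is a short continuity argument based on the Gram determinant. First, I would set $r := \rank\{\nabla_x h_i(p^0,x^0) : i \in I_0\}$ and, by the very definition of rank, pick a subset $I_0' \subset I_0$ with $|I_0'| = r$ for which the vectors $\{\nabla_x h_i(p^0,x^0)\}_{i \in I_0'}$ are linearly independent in $\hilbertH$. The task is then to show that this particular choice of $I_0'$ continues to give linearly independent vectors in a full product neighbourhood $V(p^0)\times V(x^0)$.

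The key step is to translate linear independence into the non-vanishing of a continuous scalar. In the Hilbert space $\hilbertH$, a finite family of vectors is linearly independent if and only if its Gram matrix is non-singular. By the standing assumption of \eqref{eq:1.1} that the partial gradients $\nabla_x h_i$ are jointly continuous, the map
\[
(p,x)\;\longmapsto\; G(p,x) \;:=\; \det\bigl[\,\langle \nabla_x h_i(p,x),\nabla_x h_j(p,x)\rangle\,\bigr]_{i,j\in I_0'}
\]
is continuous on $\hilbertG\times\hilbertH$. Since $G(p^0,x^0)\neq 0$ by the choice of $I_0'$, there exist neighbourhoods $V(p^0)$ and $V(x^0)$ on which $G$ does not vanish, and this is exactly the linear independence of $\{\nabla_x h_i(p,x)\}_{i\in I_0'}$ throughout $V(p^0)\times V(x^0)$.

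There is no serious obstacle to overcome: the RCRCQ hypothesis and the non-emptiness assumption on $F$ near $p^0$ are not actually used in the argument above, which is a purely pointwise continuity fact. The only mildly delicate point is that, unlike the finite-dimensional setting $\hilbertH=\mathbb{R}^s$ where one would single out a non-vanishing minor of the Jacobian, in a general Hilbert space one must work through the Gram determinant. I suspect the extra hypotheses are carried along because the proposition is phrased in the form in which it will be invoked later in the paper, rather than because they are logically necessary for the conclusion.
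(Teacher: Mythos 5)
Your proof is correct and follows essentially the same route as the paper's: select a maximal linearly independent subfamily $I_0'$ at $(p^0,x^0)$ and propagate linear independence to a neighbourhood by continuity of the gradients. Your Gram-determinant formulation makes that continuity step explicit (the paper merely asserts it), and your observation that RCRCQ and the non-emptiness of $F$ are not actually used is accurate --- the paper invokes RCRCQ to state a rank equality that its own argument never exploits.
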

\begin{proof}
	The assertion is valid if $\nabla_x h_i(p^0,x^0)$, $i\in I_0$ are linearly independent.
	Suppose that $\nabla_x h_i(p_0,x_0)$, $i\in I_0$ are linearly dependent. By RCRCQ there exist neighbourhoods $V_0(p^0)$, $V_0(x^0)$ such that 
	\begin{equation*}
	\rank \{ \nabla_x h_i (p,x),\ i\in I_0  \}=\rank \{ \nabla_x h_i (p^0,x^0),\ i\in I_0   \}.
	\end{equation*}
	Let $\rank \{ \nabla_x h_i (p^0,x^0)  ,\ i\in I_0 \}=k$. Then there exists indices $i_1,\dots,i_k\subset I_0$, $i_j\neq i_k$ for $j\neq k$ such that $\nabla_x h_{i_1} (p^0,x^0),\dots,\nabla_x h_{i_k} (p^0,x^0)$ are linearly independent.
	Denote $I_0^\prime =\{i_1,\dots,i_k\}$. Then, by the continuity of gradients of $h_i$, $i=1,\dots,n$ with respect to variable $x$, $\nabla_x h_i(\cdot,\cdot)$, $i\in I_0^\prime$ are linearly independent in some neighbourhood of $(p^0,x^0)$.
\end{proof}

In view of Proposition \ref{propostion:I_1-linearly_independent}, RCRCQ implies that there exists a subset  $I_0^\prime\subset I_0$ of indices of parametric system defined by the set-valued mapping $F$ such that\begin{equation*}
\rank \{ \nabla_x h_i(p,x), i\in I_0^\prime  \}= \rank \{ \nabla_x   h_i(p^0,x^0), i\in I_0^\prime \}=|I_0^\prime|,
\end{equation*}
for $(p,x)$ in some neighbourhood of $(p^0,x^0)$.

\section{Relaxed constant rank condition and R-regularity}

It is known \cite{MR825383,MR1946832} 
that the Mangasarian-Fromovitz constraint qualification (MFCQ) \cite{MR0207448} for the  set $F({p^0})$ at a point ${x^0} \in F({p^0})$ implies   R-regularity of the set-valued mapping $F$ at $({p^0},{x^0}) \in grF$. 

We show that RCRCQ implies R-regularity of the set-valued mapping $F$.
\begin{theorem}\label{theorem:3.1} Assume that 1) $F$ is lsc at $({p^0},{x^0}) \in grF$ relative to $domF$;  \\
	2) $F$satisfies RCRCQ at $({p^0},{x^0}) \in grF$ relative to $domF \times \hilbertH$. 
	
	Then $F$ is R-regular at $(p{,^0}{x^0})$ relative to $domF$.
\end{theorem}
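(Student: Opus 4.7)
The plan is to derive Theorem \ref{theorem:3.1} from Theorem \ref{theorem:2.1} by verifying condition (b) of that theorem. Take arbitrary sequences $p^k\to p^0$ in $domF$, $v^k\to x^0$ with $v^k\notin F(p^k)$, and $x^k\in\Pi_{F(p^k)}(v^k)$. Lower semicontinuity of $F$ yields $q^k\in F(p^k)$ with $q^k\to x^0$, and $\|v^k-x^k\|\le\|v^k-q^k\|$ forces $x^k\to x^0$. By continuity of the $h_i$ one has $I(p^k,x^k)\subseteq I(p^0,x^0)$ for $k$ large, RCRCQ persists at $(p^k,x^k)$, and Lemma \ref{lemma:multipliers} produces a multiplier $\lambda^k\in\Lambda_{v^k}(p^k,x^k)$.

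The remaining task is to replace $\lambda^k$ by some $\tilde\lambda^k\in\Lambda_{v^k}(p^k,x^k)$ whose norm $\sum_i|\tilde\lambda_i^k|$ is uniformly bounded. Fix $I_0'\subseteq I_0$ as in Proposition \ref{propostion:I_1-linearly_independent}; then $\{\nabla_x h_i(p,x):i\in I_0'\}$ is linearly independent in a neighbourhood of $(p^0,x^0)$ and, by rank constancy, spans $\{\nabla_x h_i(p,x):i\in I_0\}$ there. Substituting the basis $I_0'$ for $I_0$ in the equality part of $\lambda^k$ and then applying a Carath\'eodory-type reduction for a subspace plus a positive cone yields a $\tilde\lambda^k$ whose nonzero entries are supported in a set of the form $I_0'\cup K^k$, with $K^k\subseteq I(p^k,x^k)\subseteq I(p^0,x^0)$ and $\{\nabla_x h_i(p^k,x^k):i\in I_0'\cup K^k\}$ linearly independent. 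Only finitely many $K^k$ are possible, so along a subsequence $K^k=K^*$ is constant.

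The main obstacle, and the only place where RCRCQ enters nontrivially, is to show that the limit vectors $\{\nabla_x h_i(p^0,x^0):i\in I_0'\cup K^*\}$ remain linearly independent (linear independence of a family of vectors is in general not preserved when passing to a limit). By RCRCQ applied with $K=K^*$, the rank of $\{\nabla_x h_i(p,x):i\in I_0\cup K^*\}$ is constant near $(p^0,x^0)$; at $(p^k,x^k)$ it is $\ge|I_0'|+|K^*|$ by construction, and at $(p^0,x^0)$ it is $\le|I_0'|+|K^*|$ since $\{\nabla_x h_i(p^0,x^0):i\in I_0\}$ has rank only $|I_0'|$. Hence both ranks equal $|I_0'|+|K^*|$; because $I_0\setminus I_0'$ lies in $\mathrm{span}\{\nabla_x h_i(p^0,x^0):i\in I_0'\}$, the $K^*$-gradients must contribute $|K^*|$ independent directions modulo that span, forcing $\{\nabla_x h_i(p^0,x^0):i\in I_0'\cup K^*\}$ to be linearly independent.

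With linear independence at the limit point in hand, continuity of the Gram matrix gives a uniform positive lower bound on its smallest singular value throughout a neighbourhood of $(p^0,x^0)$. Consequently, the unique solution of the full-column-rank linear system
\[
\sum_{i\in I_0'\cup K^*}\tilde\lambda_i^k\,\nabla_x h_i(p^k,x^k)=-\frac{x^k-v^k}{\|x^k-v^k\|}
\]
satisfies $\|\tilde\lambda^k\|\le C$ for a constant $C$ depending only on $K^*$ and the neighbourhood. Taking the maximum over the finitely many possible $K^*$ yields a single $M$ that works along every subsequence; this establishes condition (b) of Theorem \ref{theorem:2.1}, whose implication (b)$\Rightarrow$(a) then delivers the R-regularity of $F$ at $(p^0,x^0)$ relative to $domF$.
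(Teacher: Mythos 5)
Your proposal is correct and follows essentially the same route as the paper: reduce to condition (b) of Theorem \ref{theorem:2.1}, pass to the linearly independent subset $I_0'\subset I_0$ from Proposition \ref{propostion:I_1-linearly_independent}, apply the Carath\'eodory-type reduction to get multipliers supported on a linearly independent family $I_0'\cup K^k$, fix the index set along a subsequence, and use RCRCQ rank constancy to show that $\{\nabla_x h_i(p^0,x^0): i\in I_0'\cup K^*\}$ is linearly independent. The only (cosmetic) difference is the finish: you obtain the uniform multiplier bound directly from a lower bound on the smallest eigenvalue of the Gram matrix, whereas the paper assumes unboundedness, normalizes, and passes to the limit to contradict that same linear independence.
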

\begin{proof}By Theorem \ref{theorem:2.1}, R-regularity  of the mapping $F$ at $({p^0},{x^0}) \in grF$ is equivalent to the fact that there exists a number $M > 0$ such that  for any sequences ${p_k} \to {p^0},{p_k} \in domF,$   ${v_k} \to {x^0},{v_k} \notin F({p_k})$,  the inequality $\Lambda _v^M({p_k},{x_k}) \ne \emptyset $ holds for all 
	${x_k} = x({p_k},{v_k}) \in \Pi _{F({p_k})}({v_k})$ and $k$ sufficiently large.

	On the  contrary, suppose that there exist sequences $p_k\rightarrow p^0$, $v_k\rightarrow x^0$, $x_k\in F({p_k})$ such that $v_k\notin F(p_k)$, $x_k\in \Pi _{F({p_k})}({v_k})$ and
	\begin{equation}\label{assumption:unbounded}
	\dist (0,\Lambda_{v_k}(p_k,x_k))\rightarrow+\infty.
	\end{equation} 
	
	Due to the fact that $x^0\in \liminf\limits_{p\rightarrow p^0} F(p),$ without loss of generality, we can assume that $F(p_k)\neq \emptyset$ for each $p_k$, $k=1,\dots$, and for any $\hat{x}_k\in \Pi_{F(p_k)}(v_k)$ we have $\hat{x}_k\rightarrow x^0$. In consequence, $x_k\rightarrow x_0$.
	
	As already noted, if RCRCQ  holds at $(p^0,x^0)$, then  RCRCQ holds also at all  points close to $(p^0,x^0)$.  Without loss of generality, we can assume that RCRCQ holds at all $(p_k,x_k)$, $k=1,2,\dots$ . Consequently, by Lemma \ref{lemma:multipliers}, $\Lambda_{v_k}(p_k,x_k)\neq \emptyset$ for all $k=1,2,\dots$. 
	
	Without loss of generality, we can assume that $(p_k,v_k)\in V(p^0,x^0)$, where by RCRCQ, $V(p^0,x^0)$ is such that for any $J$, $I_0\subset J\subset I_0\cup I(p_0,x_0)$
	\begin{equation}\label{conditon:RCRCQ}
	\rank\{ \nabla _x h_i(p,x),\ i\in J\} =\rank\{\nabla_x h_i(p^0,x^0),\ i\in J\}\quad \forall (p,x)\in V(p^0,x^0). 
	\end{equation}
	By Lemma \ref{lemma:1.1}, $\Gamma(F(p^k),x^k)=T(F(p^k),x^k) $ and by the necessary optimality conditions for  problem \eqref{problem:P2}\footnote{In the literature it is often assumed that Robinson constraint qualification holds (see for example \cite{Bonnans_Shapiro}). However, it is enough to assume that the $T(F(p^k),x^k)$ coincides with feasible set to the linearized problem to \eqref{problem:P2} (see discussion after Lemma 3.7 of \cite{Bonnans_Shapiro}).} we have
	\begin{equation}\label{eq:representation_lambda}
	v_k-x_k=\sum_{i\in I_0\cup I(p_k,x_k)} \hat{\lambda}_i^k \nabla_x h_i(p_k,x_k),\quad k=1,\dots
	\end{equation}
	where $\hat{\lambda}_i^k\in \mathbb{R}$, $i\in I_0$ and $\hat{\lambda}_i^k\geq 0$, $i\in I(p_k,x_k)$, $k=1,\dots$. 
	Recall that $I(p,x):=\{ i\in I \mid  h_i(p,x)=0  \}$ and $\hat{\lambda}_i^k$, $i\in I\cup I(p_k,x_k)$, are related to the set $\Lambda_{v_k}(p_k,x_k)$ via the
	relationship 
	\begin{align*}
	&\frac{v_k-x_k}{\|v_k-x_k\|}=\sum_{i\in I_0\cup I(p_k,x_k)}\lambda_i^k \nabla_x h_i(p_k,x_k)\iff v_k-x_k=\sum_{i\in I_0\cup I(p_k,x_k)}\hat{\lambda}_i^k  \nabla_x h_i(p_k,x_k),
	\end{align*}
	for some $\lambda^k\in \Lambda_{v_k}(p_k,x_k)$.
	
	By Proposition \ref{propostion:I_1-linearly_independent}, there exist neighbourhoods $V_1(p^0)$, $V_1(x^0)$ and indices $I_0^\prime\subset I_0$, 
	$|I_0^\prime|=\rank \{ \nabla_x h_i(p_0,x_0)\ i\in I_0^\prime \}$ such that for all $(p,x)\in V_1(p^0)\times V_1(x^0)$ vectors
	$\nabla_x h_i(p,x)$, $i\in I_0^\prime$ are linearly independent. Hence, without loss of generality we can assume that $(p_k,x_k)\in V_1(p^0)\times V_1(x^0)$, and for every $k=1,\dots$, we have
	\begin{equation*}
	\sum_{i\in I_0} \hat{\lambda}_i^k \nabla_x h_i(p_k,x_k)=\sum_{i\in I_0^\prime} \check{\lambda}_i^k \nabla_x h_i(p_k,x_k),
	\end{equation*}
	for some $\check{\lambda}_i^k\in \mathbb{R}$, $i\in I_0^\prime$. 
	
	Hence we can rewrite \eqref{eq:representation_lambda} as follows
	%
	%
	\begin{equation}\label{eq:representation_sequence}
	v_k-x_k=\sum_{i \in I_0^\prime} \check{\lambda}_i^k \nabla_x h_i(p_k,x_k)+\sum_{i\in I(p_k,x_k)} \hat{\lambda}_i^k \nabla_x h_i(p_k,x_k),
	\end{equation}
	where $\nabla_x h_i(p_k,x_k)$, $i\in I_0^\prime$, are linearly independent.
	
	Passing to a subsequence, if necessary, we may assume that for all $k\in \mathbb{N}$, $ I(p_k,x_k)$ is a fixed set, i.e., $ I(p_k,x_k)=I^0$.
	
	By \cite[Lemma 2]{2018arXiv181105166B} we have that for any $k=1,\dots$ there exists $I^k\subset I^0$ such that 
	\begin{equation*}
	\sum_{i \in I_0^\prime} \check{\lambda}_i^k \nabla_x h_i(p_k,x_k)+\sum_{i\in I^0} \hat{\lambda}_i^k \nabla_x h_i(p_k,x_k)=\sum_{i \in I_0^\prime} \tilde{\lambda}_i^k \nabla_x h_i(p_k,x_k)+\sum_{i\in I^k} \tilde{\lambda}_i^k \nabla_x h_i(p_k,x_k),
	\end{equation*}
	where $\tilde{\lambda}_i^k\geq 0$, $i\in I^k$ and $\nabla_x h_i(p_k,x_k)$, $i\in I_0^\prime \cup I^k$ are linearly independent. Hence we can rewrite \eqref{eq:representation_sequence} as 
	\begin{equation}\label{eq:representation_sequence2}
	v_k-x_k=\sum_{i \in I_0^\prime} \tilde{\lambda}_i^k \nabla_x h_i(p_k,x_k)+\sum_{i\in I^k} \tilde{\lambda}_i^k \nabla_x h_i(p_k,x_k).
	\end{equation}
	Again, passing to a subsequence, if necessary, we may assume that $I^k=I^\prime$ is a fixed set.
	
	Put $\lambda_i^k=\tilde{\lambda}_i^k \|v_k-p_k\|^{-1}$, $i\in I_0^\prime\cup I^\prime$, $k=1,\dots$ and $\lambda_i^k=0$, $i\in I_0\cup I \setminus (I_0^\prime \cup I^\prime)$. 
	Let us denote $\lambda^k=[\lambda_i^k]_{i\in I_0^\prime\cup I^\prime}$. We have  $\lambda^k\in \Lambda_{v_k}(p_k,x_k)$ 		
	and, by  \eqref{assumption:unbounded}, $\|\lambda^k\|\rightarrow+\infty $. Without loss of generality we may assume that $\lambda^k
	\|\lambda^k\|^{-1}\rightarrow\bar{\lambda}$.  From \eqref{eq:representation_sequence2} we have
	\begin{equation}\label{eq:representation_sequence3}
	\frac{v_k-x_k}{\|v_k-x_k\|\|\lambda^k\|}=\sum_{i \in I_0^\prime} \frac{\lambda_i^k}{\|\lambda_k\|} \nabla_x h_i(p_k,x_k)+\sum_{i\in I^k} \frac{\lambda_i^k}{\|\lambda_k\|} \nabla_x h_i(p_k,x_k).
	\end{equation}
	By passing to the limit in \eqref{eq:representation_sequence3} we obtain
	\begin{equation*}
	0=\sum_{i\in I_0^\prime\cup I^\prime} \bar{\lambda}_i \nabla_x h_i(p^0,x^0),\ \bar{\lambda}_i\geq 0,\ i\in I^\prime, 
	\end{equation*}
	where $\|\bar{\lambda}\|=1$. This contradicts the fact that for $k=1,\dots$
	\begin{align*}
	&\rank \{\nabla_x h_i(p_0,x_0),\ i\in I_0^\prime\cup I^\prime  \}\\
	&=\rank \{\nabla_x h_i(p_0,x_0),\ i\in I_0\cup I^\prime  \}\\
	&=\rank \{\nabla_x h_i(p_k,x_k),\ i\in I_0\cup I^\prime  \}\\
	&=\rank \{\nabla_x h_i(p_k,x_k),\ i\in I_0^\prime\cup I^\prime  \}=|I_0^\prime\cup I^\prime|,
	\end{align*}
	i.e. vectors $\nabla_x h_i(p_0,x_0),\ i\in I_0^\prime\cup I^\prime$ are linearly independent.
\end{proof}

\section{Lipschitz-likeness of $F$}

The following theorem provides the relationships between R-regularity and pseudo-Lipschitzness of the set-valued mapping $F$ defined on a normed space $\hilbertG$.
\begin{theorem}\label{theorem:3.3}
	Assume that $F$ is R-regular at a point $({p^0},{x^0}) \in grF$ relative to $domF$. Then $F$ is pseudo-Lipschitzian at this point relative to $domF$.\end{theorem}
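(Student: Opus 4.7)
The plan is to combine the R-regularity estimate evaluated at $(p^2,x)$ with the membership $x\in F(p^1)$, so that the constraint residual at $(p^2,x)$ converts into differences of the form $h_i(p^2,x)-h_i(p^1,x)$, which can then be bounded by $\|p^2-p^1\|$ through a local Lipschitz property of the $h_i$ in the parameter.

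First I would take $V(p^0)$, $V(x^0)$ and $M>0$ from Definition~\ref{definition_R_regularity} applied to the R-regularity hypothesis. Possibly shrinking the neighbourhoods, I invoke a local Lipschitz modulus $L>0$ of the functions $h_i$, $i=1,\dots,n$, with respect to the parameter $p$, uniform in $x\in V(x^0)$. This is the one piece of structure that must be supplied in addition to the paper's standing joint continuity and continuity of $\nabla_x h_i$; it is the point at which the argument is delicate.

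Next, fix $p^1,p^2\in V(p^0)\cap\operatorname{dom}F$ and $x\in F(p^1)\cap V(x^0)$---the inclusion to be proved is vacuous if this intersection is empty, so no semicontinuity of $F$ is needed at this stage. Applying \eqref{r-regularity} at the point $(p^2,x)$ yields
\begin{equation*}
\dist(x,F(p^2))\le M\max\{0,h_i(p^2,x),\ i\in I,\ |h_i(p^2,x)|,\ i\in I_0\}.
\end{equation*}
Since $x\in F(p^1)$, for $i\in I$ we have $h_i(p^1,x)\le 0$, whence $h_i(p^2,x)\le h_i(p^2,x)-h_i(p^1,x)$; and for $i\in I_0$ we have $h_i(p^1,x)=0$, whence $|h_i(p^2,x)|=|h_i(p^2,x)-h_i(p^1,x)|$.

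The Lipschitz bound in $p$ then gives $|h_i(p^2,x)-h_i(p^1,x)|\le L\|p^2-p^1\|$ uniformly in $i$, and combining with the previous display produces
\begin{equation*}
\dist(x,F(p^2))\le ML\|p^2-p^1\|,
\end{equation*}
so that taking $l_F:=ML+\varepsilon$ with any $\varepsilon>0$ yields $x\in F(p^2)+l_F\|p^2-p^1\|B$, which is precisely the pseudo-Lipschitz inclusion. The main obstacle is the first step---securing a genuine Lipschitz-in-$p$ estimate from the standing hypotheses; past that point everything reduces to a one-line manipulation of the R-regularity inequality \eqref{r-regularity}.
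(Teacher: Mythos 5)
Your proposal is correct and follows essentially the same route as the paper: apply the R-regularity estimate \eqref{r-regularity} at $(p^2,x)$ for $x\in F(p^1)\cap V(x^0)$, use $h_i(p^1,x)\le 0$ (resp.\ $=0$) to turn the residuals into differences $h_i(p^2,x)-h_i(p^1,x)$, and bound these via a local Lipschitz constant of the $h_i$ in $p$ --- the same implicit Lipschitz-in-$p$ hypothesis the paper invokes (and acknowledges in the remark following the theorem), which you rightly flag as not following from the standing smoothness assumptions. The only difference is that you omit the paper's intermediate lower-Lipschitz-continuity step, which is indeed not needed for the inclusion itself since it is vacuous when $F(p^1)\cap V(x^0)=\emptyset$.
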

\begin{proof} If  $F$ is R-regular at a point $({p^0},{x^0}) \in grF$ relative to $domF$,  this means that there are  numbers $M > 0$,$\delta  > 0$, $\varepsilon  > 0$ such that  
	\begin{equation}\label{eq:3.7}
	d(x,F(p)) \le M\max \{ 0,{h_i}(p,x):i \in I,\left| {{h_i}(p,x)} \right|:i \in {I_0}\}     
	\end{equation}                                                                 
	for all $x \in {V_\varepsilon }({x^0})$ and all $p \in {V_\delta }({p^0}) \cap domF$.
	
	Denote    $l = \max \{ {l_i}\left| {i = 1,\dots,p\} } \right.$ where ${l_i}$ are Lipschitz constants for functions ${h_i}(p,x)$ on some set $V({p^0}) \times V({x^0})$.  Choose numbers $\delta $ and $\varepsilon $ such that  ${V_\delta }({p^0}) \subset V({p^0}),{V_\varepsilon }({x^0}) \subset V({x^0})$.  Then
	\begin{align*}
	&d({x^0},F(p)) \le M\max \{ 0,{h_i}(p,{x^0}):i \in I,\left| {{h_i}(p,{x^0})} \right|:i \in {I_0}\}  \le \\
	& \le M\max \{ 0,{h_i}(p,{x^0}) - {h_i}({p^0},{x^0}):i \in I,\left| {{h_i}(p,{x^0}) - {h_i}({p^0},{x^0})} \right|:i \in {I_0}\}  \le l\| {p - {p^0}} \|       
	\end{align*}                                                               
	for all $p \in {V_\delta }({p^0}) \cap domF$.
	
	This means that $F$ is Lipschitz lower continuous at $({p^0},{x^0})$ relative to $domF$ and, consequently (see \eqref{eq:1.2}),  $F(p) \cap {V_\varepsilon }({x^0}) \ne \emptyset $ for all $p \in {V_\eta }({p^0}) \cap domF$, where $\eta  < \min \{ \delta ,\varepsilon /l\} $.  Let $p,\tilde p \in {V_\eta }({p^0}) \cap domF$ and let $\tilde x \in F(\tilde p) \cap {V_\varepsilon }({x^0})$.  Then, from \eqref{eq:3.7} follows  
	\begin{align*}
	&d(\tilde x,F(p)) \le M\max \{ 0,{h_i}(p,\tilde x):i \in I,\left| {{h_i}(p,\tilde x)} \right|:i \in {I_0}\}  \le \\                                                                                     
	& \le M\max \{ 0,{h_i}(p,\tilde x) - {h_i}(\tilde p,\tilde x):i \in I,\left| {{h_i}(p,\tilde x) - {h_i}(\tilde p,\tilde x)} \right|:i \in {I_0}\}  \le l\| {p - \tilde p} \|.
	\end{align*}
	The last inequality is equivalent to 
	$$F(\tilde p) \cap {V_\varepsilon }({x^0}) \subset F(p) + l\| {p - \tilde p} \|B\ \text{for all}\  p,\tilde p \in {V_{{\delta _0}}}({p^0}) \cap domF.$$
\end{proof}

\begin{remark} Let us note that in Theorem \ref{theorem:3.3}  we only need to assume that all the functions $f$ and $h_i$, $i=1,\dots,n$  are locally Lipschitz continuous (may not be differentiable) near $({p^0},{x^0}) \in grF$. 
\end{remark}

By Theorem \ref{theorem:3.1} we obtain the following result

\begin{theorem}\label{theorem:pseudo_Lipschitzian}
	Assume that 1) $F$ is lsc at $({p^0},{x^0}) \in grF$ relative to $domF$;  \\
	2) $F$satisfies RCRCQ at $({p^0},{x^0}) \in grF$ relative to $domF \times \hilbertH$.
	
	Then $F$ is pseudo-Lipschitzian at this point relative to $domF$.	
\end{theorem}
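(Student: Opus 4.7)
The plan is to combine two results already established in the paper, namely Theorem \ref{theorem:3.1} and Theorem \ref{theorem:3.3}, in a straightforward chain. Neither step requires any new construction: the theorem is structurally a corollary of these two preceding results, and the task is simply to verify that the hypotheses feed through cleanly.

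First, I would invoke Theorem \ref{theorem:3.1}. Assumptions (1) and (2) of the present theorem coincide verbatim with the hypotheses of Theorem \ref{theorem:3.1}: $F$ is lower semicontinuous at $(p^0,x^0) \in grF$ relative to $domF$, and $F$ satisfies RCRCQ at $(p^0,x^0)$ relative to $domF \times \hilbertH$. Therefore Theorem \ref{theorem:3.1} applies and yields that $F$ is R-regular at $(p^0,x^0)$ relative to $domF$, i.e., there exist $M>0$ and neighbourhoods $V(p^0)$, $V(x^0)$ such that the estimate \eqref{r-regularity} holds for every $x \in V(x^0)$ and every $p \in V(p^0) \cap domF$.

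Second, I would apply Theorem \ref{theorem:3.3}. The R-regularity of $F$ at $(p^0,x^0)$ relative to $domF$ obtained in the previous step is precisely the hypothesis of Theorem \ref{theorem:3.3}. Note that the standing assumptions of the paper, stated below \eqref{eq:1.1}, guarantee the joint continuity of $h_i$ together with $\nabla_x h_i$ for $i=1,\dots,n$; combined with the local boundedness arguments implicit in Theorem \ref{theorem:3.3}, this supplies the local Lipschitz continuity of $h_i$ with respect to $p$ on a neighbourhood of $(p^0,x^0)$ which is needed in that proof. Consequently Theorem \ref{theorem:3.3} yields that $F$ is pseudo-Lipschitzian at $(p^0,x^0)$ relative to $domF$, which is the desired conclusion.

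There is essentially no obstacle to overcome: the only point of care is to make sure that the localization of the neighbourhoods matches (the R-regularity neighbourhoods produced by Theorem \ref{theorem:3.1} may need to be shrunk to lie inside the Lipschitz neighbourhood used in Theorem \ref{theorem:3.3}), but this is automatic. The proof therefore reduces to a one-line chain: \emph{RCRCQ plus lower semicontinuity} $\Longrightarrow$ \emph{R-regularity} (Theorem \ref{theorem:3.1}) $\Longrightarrow$ \emph{pseudo-Lipschitz continuity} (Theorem \ref{theorem:3.3}).
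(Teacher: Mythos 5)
Your proof is correct and is exactly the paper's argument: the paper presents Theorem \ref{theorem:pseudo_Lipschitzian} as an immediate consequence of Theorem \ref{theorem:3.1} (lsc plus RCRCQ implies R-regularity) chained with Theorem \ref{theorem:3.3} (R-regularity implies pseudo-Lipschitz continuity). One caveat: your claim that joint continuity of $h_i$ and $\nabla_x h_i$ supplies local Lipschitz continuity of $h_i$ with respect to $p$ is false (consider $h(p,x)=\sqrt{|p|}$, which is continuous with $\nabla_x h\equiv 0$ but not Lipschitz in $p$ near $0$); the local Lipschitz continuity of the $h_i$ in $(p,x)$ used in the proof of Theorem \ref{theorem:3.3} is an implicit additional hypothesis there, as the remark following that theorem indicates, rather than a consequence of the paper's standing continuity assumptions.
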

Let us note that for some particular functions $h_i(\cdot,\cdot)$, $i\in I\cup I_0$ Theorem \ref{theorem:pseudo_Lipschitzian} has been already proved in \cite{2018arXiv181105166B}. 

In the finite-dimensional setting, when both $\hilbertH$ and $\hilbertG$ are finite-dimensional spaces the results analogous to Theorem \ref{theorem:pseudo_Lipschitzian} can be obtained via properties of the optimal value function defined as
$\varphi (p) := \inf \{ f(p,x)\mid x \in F(p)\} $ and the solution 
set $S(p): = \{ x \in F(p)\mid {f(p,x) \le \varphi (p)} \} $.
\begin{definition} Let ${p^0} \in domF$.  A mapping $F$ is locally bounded at ${p^0}$ if there exist a neighborhood $V({p^0})$ and a bounded set ${Y_0}$ such that $F(p) \subset {Y_0}$ for all $p \in V({p^0})$.
\end{definition}
\begin{theorem}\label{theorem:3.2} Assume that 1) F is locally bounded at ${p^0}$and  functions $f,{h_i}$ are Lipschitz continuous on a set $V({p^0}) \times ({Y_0} + \varepsilon B)$, where $\varepsilon  > 2diam{Y_0}$; \\
	2) $F$is R-regular at $({p^0},{x^0}) \in grF$ relative to $dom\, F$.\\
	Then $\varphi $ is Lipschitz continuous on some set $V({p^0}) \cap domF$.
\end{theorem}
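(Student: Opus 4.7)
The strategy is to bootstrap from Theorem \ref{theorem:3.3}: R-regularity at $(p^0,x^0)$ gives pseudo-Lipschitz continuity of $F$, which combined with Lipschitz continuity of $f$ on the enlarged set $V(p^0)\times(Y_0+\varepsilon B)$ transfers to a Lipschitz estimate for $\varphi$. The role of the local boundedness of $F$ is to confine the feasible sets inside $Y_0$, while the condition $\varepsilon > 2\operatorname{diam} Y_0$ guarantees that every perturbation we perform stays inside the domain where the Lipschitz constants $L_f$ and $L_h$ of $f$ and $h_i$ apply.

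\textbf{Step 1 (pseudo-Lipschitz of $F$).} By Theorem \ref{theorem:3.3}, there exist $\delta_0,\varepsilon_0>0$ and $l>0$ such that
$$
F(\tilde p)\cap V_{\varepsilon_0}(x^0) \subset F(p) + l\|p-\tilde p\|B
\qquad \text{for all } p,\tilde p\in V_{\delta_0}(p^0)\cap \operatorname{dom} F.
$$

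\textbf{Step 2 (localization).} Using local boundedness, shrink $\delta_0$ so that $F(p)\subset Y_0$ for all $p\in V_{\delta_0}(p^0)\cap\operatorname{dom} F$. Shrink $\delta_0$ further so that $2l\delta_0 < \varepsilon - 2\operatorname{diam} Y_0$. Then for every $x\in Y_0$ and every $y$ with $\|y-x\|\le l\|p-\tilde p\|$ and $p,\tilde p\in V_{\delta_0}(p^0)$, both $x$ and $y$, together with the segment joining them, lie in $Y_0+\varepsilon B$, on which $f$ is globally $L_f$-Lipschitz.

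\textbf{Step 3 (transfer to $\varphi$).} Fix $p,\tilde p\in V_{\delta_0}(p^0)\cap\operatorname{dom} F$ and $\rho>0$. Choose an $\rho$-approximate minimizer $\tilde x\in F(\tilde p)$ with $f(\tilde p,\tilde x)\le \varphi(\tilde p)+\rho$, which we may assume lies in $V_{\varepsilon_0}(x^0)$ (see the obstacle below). By Step~1 there exists $x\in F(p)$ with $\|x-\tilde x\|\le l\|p-\tilde p\|$. Step~2 ensures $(p,x),(\tilde p,\tilde x)\in V(p^0)\times (Y_0+\varepsilon B)$, so the Lipschitz continuity of $f$ gives
$$
\varphi(p)\le f(p,x)\le f(\tilde p,\tilde x) + L_f(1+l)\|p-\tilde p\|\le \varphi(\tilde p)+\rho + L_f(1+l)\|p-\tilde p\|.
$$
Letting $\rho\downarrow 0$ and swapping the roles of $p$ and $\tilde p$ yields $|\varphi(p)-\varphi(\tilde p)|\le L_f(1+l)\|p-\tilde p\|$, which is the desired Lipschitz bound on $V_{\delta_0}(p^0)\cap\operatorname{dom} F$.

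\textbf{Main obstacle.} The delicate point is the italicized assumption in Step~3: R-regularity of $F$ is tied to the base point $x^0$, so the pseudo-Lipschitz inclusion of Step~1 only allows us to move approximate minimizers that lie inside the neighbourhood $V_{\varepsilon_0}(x^0)$, whereas the infimum defining $\varphi(\tilde p)$ ranges over all of $F(\tilde p)\subset Y_0$, which is generally larger than $V_{\varepsilon_0}(x^0)$. Closing this gap requires either an implicit hypothesis that $x^0\in S(p^0)$ together with upper semicontinuity of $S$ at $p^0$ (which would force approximate minimizers of $f(\tilde p,\cdot)$ to cluster near $x^0$ as $\tilde p\to p^0$), or a covering argument applying the R-regularity/pseudo-Lipschitz machinery at every cluster point of such minimizing sequences in $Y_0$. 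Once this confinement is justified, the rest of the argument is the routine combination of pseudo-Lipschitzness of $F$ and Lipschitz continuity of $f$ described above.
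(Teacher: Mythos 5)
You have correctly identified that your Step 3 does not close: the pseudo-Lipschitz inclusion obtained from Theorem \ref{theorem:3.3} only moves points of $F(\tilde p)\cap V_{\varepsilon_0}(x^0)$, while the infimum defining $\varphi(\tilde p)$ ranges over all of $F(\tilde p)\subset Y_0$, and nothing in the hypotheses forces approximate minimizers to lie near $x^0$ (indeed $x^0$ is not even assumed to belong to $S(p^0)$). Neither of the two repairs you sketch (inner/upper semicontinuity of $S$ at $p^0$, or a covering argument) is actually carried out, so as written the argument is incomplete at precisely the step that carries the whole load.

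The paper proves the theorem by a genuinely different mechanism, which also explains why it states the hypothesis $\varepsilon>2\,\mathrm{diam}\,Y_0$: it invokes an exact penalization result (Lemma 3.68 of \cite{MR1946832}) to write, for any $\alpha>l_0$ and $p\in V(p^0)$, $\varphi(p)=\inf\{f(p,x)+\alpha\,\dist(x,F(p))\mid x\in Y_0+2^{-1}\varepsilon B\}$. Evaluating the penalized objective at $\tilde x\in S(\tilde p)$ gives $\varphi(p)-\varphi(\tilde p)\le f(p,\tilde x)-f(\tilde p,\tilde x)+\alpha\,\dist(\tilde x,F(p))$, and the R-regularity inequality \eqref{r-regularity} bounds $\dist(\tilde x,F(p))$ by $M\max\{0,h_i(p,\tilde x),\,|h_i(p,\tilde x)|\}\le M\max_i l_i\|p-\tilde p\|$, because $\tilde x$ is feasible for $\tilde p$ so each violation at $(p,\tilde x)$ is at most the increment $h_i(p,\tilde x)-h_i(\tilde p,\tilde x)$. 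This replaces your set-inclusion chase (and the constant $L_f(1+l)$) by a single scalar distance estimate, yielding the constant $l_0+\alpha M\max_i l_i$. Be aware, however, that the paper's proof applies the R-regularity estimate at points $(p,\tilde x)$ with $\tilde x\in S(\tilde p)$ only known to lie in $Y_0$, and dismisses the localization with a ``without loss of generality''; so the confinement issue you diagnose is genuine and is, in effect, being absorbed by reading R-regularity as holding uniformly on the relevant region rather than only near $(p^0,x^0)$. In short: your diagnosis of the obstacle is sound, your route is not the paper's, and your proposal does not yet constitute a proof.
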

\begin{proof}  Let ${l_0} > 0$  and ${l_i} > 0$ be  Lipschitz  constants  for  $f$ and ${h_i}$ on a set $V({p^0}) \times ({Y_0} + \varepsilon B)$. Then due to Lemma 3.68 \cite{MR1946832}
	$$\varphi (p) = \inf \{ f(p,x) + \alpha d(x,F(p))\mid {x \in {Y_0}}  + {2^{ - 1}}\varepsilon B\}\ \text{for all}\ p \in V({p^0})\ \text{and any}\ \alpha  > {l_0}.$$
	
	Let us take any $p,\tilde p \in V({p^0})$ and $\tilde x \in S(\tilde p)$. Then without loss of generality
	\begin{align*}
	&\varphi (p) - \varphi (\tilde p) \le f(p,\tilde x) + \alpha d(\tilde x,F(p)) - f(\tilde p,\tilde x) \le \\
	& \le f(p,\tilde x) - f(\tilde p,\tilde x) + \alpha M\max \{ 0,{h_i}(p,\tilde x):i \in I,\left| {{h_i}(p,\tilde x)} \right|:i \in {I_0}\}  \le \\
	& \le {l_0}\left| {p - \tilde p} \right| + \alpha M\max \{ 0,{h_i}(p,\tilde x) - {h_i}(\tilde p,\tilde x):i \in I,\left| {{h_i}(p,\tilde x) - {h_i}(\tilde p,\tilde x)} \right|:i \in {I_0}\}  \le \\
	& \le ({l_0} + \alpha M\max \{ {l_i}:i \in I \cup {I_0}\} )\| {p - \tilde p} \|.
	\end{align*}
	
	Similarly, one can obtain $\varphi (\tilde p) - \varphi (p) \le ({l_0} + \alpha M\max \{ {l_i}:i \in I \cup {I_0}\} )\| {p - \tilde p} \|.$       
\end{proof}

\section{Application to bilevel programming}
Consider a bilevel programming problem (BLPP):
\begin{align}\tag{BLPP}\label{BLPP}
\begin{array}{ll}
\text{minimize} &G(p,x), \\
\text{s.t.}&  x\in S(p):=\arg\min\{ f(p,x) \mid x\in F(p) \},\\
&p\in P := \{ p \in {\mathbb{R}^n}\mid  {{g_j}(p) \le {0}\ j \in J} \} ,
\end{array}
\end{align}
where $F(p)$ is defined in \eqref{eq:1.1}, $J = \{ 1,\dots,l\} $, functions $G(p,x)$, ${g_j}(p)$, $f(p,x)$ and ${h_i}(p,x)$ are continuously differentiable  (see e.g. monograph \cite{MR1921449}). 

The point $(p,x)$ is said to be a feasible point to the problem \eqref{BLPP} if $p \in P$, $x \in S(p)$. A feasible $({p^0},{x^0})$ is called a solution (local solution) of the problem \eqref{BLPP} if $G({p^0},{x^0}) \le G(p,x)$ for all feasible points $(p,x)$ (for all feasible points from some neighborhood of $({p^0},{x^0})$).

The problem \eqref{BLPP} can be equivalently reformulated as the following one-level problem
\begin{equation}\label{eq:4.1}
\begin{array}{ll}
\text{minimize} & G(p,x),\\
\text{s.t.} & p \in P,\ x \in S(p) = \{ x \in F(p)\mid f(p,x) \le \varphi (p)\},
\end{array}
\end{equation}
where $\varphi (p) = \inf \{ f(p,x)|x \in F(p)\} $ is the optimal value function of the lower-level problem. 
Main difficulty in solving  problem \eqref{eq:4.1} comes from the  nonsmoothness of the value function $\varphi (p)$.  Ye and Zhu \cite{doi:10.1080/02331939508844060} introduced the concept of partial calmness which allowed to move the nonsmooth constraint from the feasible set to the objective function.

Let $({p^0},{x^0})$ be a feasible point of the problem \eqref{BLPP}. The problem \eqref{BLPP} in the form \eqref{eq:4.1} is called partially calm at $({p^0},{x^0})$, if there exist a number $\mu  > 0$ and a neighborhood $V$ of the point $({p^0},{x^0},0)$ in ${\mathbb{R}^n} \times {\mathbb{R}^m} \times \mathbb{R}$ such that $G(p,x) - G({p^0},{x^0}) + \mu \left| u \right| \ge 0$ for all $(p,x,u) \in V$ such that $p \in \hilbertG$, $x \in S(p)$, $f(p,x) - \varphi (p) + u = 0$.

In \cite{doi:10.1080/02331939508844060} it was proved that the problem \eqref{BLPP} in the form \eqref{eq:4.1} is partial calm at its local solution $({p^0},{x^0})$if and only if there exists a number $\mu  > 0$ such that $({p^0},{x^0})$ is a local solution of the partially penalized problem
\begin{equation}\label{eq:4.2}
\begin{array}{ll}
\text{mimimize} & G(p,x) + \mu (f(p,x) - \varphi (p)), \\
\text{s.t.} & p \in P,\ x \in F(p).   \end{array}
\end{equation}
Ye and Zhu \cite{doi:10.1080/02331939508844060} showed that the problem \eqref{BLPP} with a linear in $(p,x)$ lower-level problem is partially calm. 

Let us define the set-valued mapping $S:p \rightrightarrows S(p)$ and  consider the bilevel program \eqref{BLPP} under the following assumption:
\begin{enumerate}
	\item[(H1)] $P \cap domS = P \cap domF$.
\end{enumerate}

Denote ${h_0}(p,x) := f(p,x) - \varphi (p)$. Introduce the sets
$$D := \{ (p,x) \mid  {{h_i}(p,x) \le 0,\ i \in I,\ g_j(p) \le 0,\ j \in J\} } ,\ C := \{ (p,x) \in D \mid  {{h_0}(p,x) \le 0\} }  .$$
and $
C(p):= \{ x \in \mathbb{R}^m \mid  (p,x)\in C \}.  
$
\begin{theorem}\label{theorem:4.1}
	Let $({p^0},{x^0})$ be a local solution of  problem \eqref{BLPP}. Suppose that the mapping $S$ is R-regular at $({p^0},{x^0})$ and the function $G$ is Lipschitz continuous on the set $D$ with Lipschitz constant ${l_0} > 0$. Then there exists a number ${\mu _0} > 0$ such that for any $\mu  > {\mu _0}$ the point $({p^0},{x^0})$is a local solution to the problem 
	\begin{equation}
	\begin{array}{ll}
	\text{minimize} &G(p,x) + \mu (f(p,x) - \varphi (p)) ,\\
	\text{s.t.} & (p,x) \in D.
	\end{array}
	\end{equation}
\end{theorem}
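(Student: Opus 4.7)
\medskip

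\textbf{Proof plan.}
The approach follows a Ye--Zhu type argument for partial calmness: use R-regularity of the solution mapping $S$ to bound the distance from $x$ to $S(p)$ by the lower-level optimality gap $f(p,x)-\varphi(p)$, and then exploit Lipschitz continuity of $G$ to transfer the local optimality of $(p^0,x^0)$ into the required penalty inequality. Viewing $S(p)$ as the solution set of the system defining $F(p)$ augmented by the extra inequality $h_0(p,x):=f(p,x)-\varphi(p)\le 0$, R-regularity of $S$ at $(p^0,x^0)$ yields constants $M,\delta_1,\varepsilon_1>0$ such that
\[
\dist(x,S(p))\le M\max\{0,h_0(p,x),h_i(p,x)\ (i\in I),|h_i(p,x)|\ (i\in I_0)\}
\]
for all $p\in V_{\delta_1}(p^0)\cap domS$ and $x\in V_{\varepsilon_1}(x^0)$. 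For $(p,x)\in D$ in this neighbourhood, assumption (H1) gives $p\in domS$, the $h_i$-constraints kill the corresponding terms in the maximum, and $x\in F(p)$ forces $h_0(p,x)\ge 0$. Hence the estimate collapses to $\dist(x,S(p))\le M(f(p,x)-\varphi(p))$.

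\medskip

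Set $\mu_0:=Ml_0$ and fix $\mu>\mu_0$. Let $\varepsilon_0>0$ be the local-optimality radius of $(p^0,x^0)$ for \eqref{BLPP} and restrict to a smaller neighbourhood $V_{\delta_2}(p^0)\times V_{\varepsilon_2}(x^0)$ with $\delta_2+\varepsilon_2<\varepsilon_0/4$ (and contained in the R-regularity neighbourhood). For $(p,x)\in D$ inside this shrunken neighbourhood, split into two cases. If $f(p,x)-\varphi(p)\le \varepsilon_0/(4M)$, then $\dist(x,S(p))\le \varepsilon_0/4$, so for any sufficiently small $\eta>0$ there exists $\bar{x}\in S(p)$ with $\|x-\bar{x}\|<\dist(x,S(p))+\eta$ satisfying $\|\bar{x}-x^0\|<\varepsilon_0$; thus $(p,\bar{x})$ is \eqref{BLPP}-feasible and lies in the local-optimality window, and combining local optimality with Lipschitzness of $G$ gives
\[
G(p^0,x^0)\le G(p,\bar{x})\le G(p,x)+l_0\|x-\bar{x}\|\le G(p,x)+Ml_0(f(p,x)-\varphi(p))+l_0\eta.
\]
Letting $\eta\downarrow 0$ and using $\mu>Ml_0$ together with $f(p,x)-\varphi(p)\ge 0$ yields the desired inequality. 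In the complementary case $f(p,x)-\varphi(p)>\varepsilon_0/(4M)$, Lipschitzness bounds $|G(p,x)-G(p^0,x^0)|\le l_0(\delta_2+\varepsilon_2)<l_0\varepsilon_0/4$, while $\mu(f(p,x)-\varphi(p))>Ml_0\cdot\varepsilon_0/(4M)=l_0\varepsilon_0/4$, so the penalty strictly dominates and again $G(p,x)+\mu(f(p,x)-\varphi(p))\ge G(p^0,x^0)$.

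\medskip

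The principal technical obstacle is that $f(p,x)-\varphi(p)$ need not tend to zero as $(p,x)\to(p^0,x^0)$ along $D$: Theorem \ref{theorem:3.3} applied to $S$ only yields pseudo-Lipschitzness of $S$, which in turn gives only upper semi-continuity of $\varphi$ at $p^0$ (through $\varphi(p)\le f(p,y(p))$ for some $y(p)\in S(p)$ close to $x^0$); lower semi-continuity is not available in general. A single ``push in $\bar{x}$ and invoke local optimality'' argument therefore fails whenever $\varphi$ drops steeply, and the case split is the device that repairs this by absorbing large lower-level violations directly into the penalty. The only delicate bookkeeping is in matching the threshold separating the two cases with the local-optimality radius and the Lipschitz constants so that the single value $\mu_0=Ml_0$ works in both regimes simultaneously.
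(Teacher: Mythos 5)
Your proof is correct and rests on the same two pillars as the paper's argument: R-regularity converts the lower-level optimality gap $f(p,x)-\varphi(p)$ into a bound on the distance from $x$ to the solution set, and Lipschitz continuity of $G$ turns that distance bound into an exact penalty with threshold $\mu_0=l_0M$. The difference is one of packaging. The paper applies R-regularity to the fiber mapping $p\rightrightarrows C(p)$, relates $\dist(x,C(p))$ to the product-space distance $d((p,x),C)$ through (H1), and then invokes Clarke's exact penalization theorem (Proposition 2.4.3 of \cite{MR709590}) to conclude. You instead apply R-regularity directly to $S$ and reprove the penalization step by hand via the two-case split (small versus large optimality gap); this case analysis is precisely the standard proof of Clarke's result specialized to the present setting, so your version is self-contained and makes explicit where local optimality of $(p^0,x^0)$, feasibility of the projected point $(p,\bar{x})$, and the nonnegativity of $h_0$ on the feasible region enter, at the cost of some bookkeeping with the radii $\delta_2,\varepsilon_2,\varepsilon_0$. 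One remark that applies equally to both arguments: the set $D$ as defined in the paper omits the equality constraints $h_i(p,x)=0$, $i\in I_0$, so the collapse of the R-regularity maximum to the single term $\max\{0,h_0(p,x)\}$ (equivalently, your claim that $(p,x)\in D$ forces $x\in F(p)$ and hence $h_0(p,x)\ge 0$) is only literally valid when $I_0=\emptyset$ or when $D$ is understood to include the equality constraints; this is a defect of the setup rather than of your argument.
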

\begin{proof} It follows from (H1)  that $d(x,C(p)) = d((p,x),\{ p\}  \times C(p)) \ge d((p,x),C)$ for all $(p,x) \in D$.
	In virtue of Proposition 2.4.3 of \cite{MR709590}, for all $\alpha  > {l_0}$, the point $({p^0},{x^0})$ is a solution of the problem 
	\begin{equation}
	\begin{array}{ll}
	\text{minimize} & G(p,x) + \alpha d((p,x),C) ,\\
	\text{s.t.} & (p,x) \in D.
	\end{array}
	\end{equation}
	Then $d((p,x),C) \le d(x,C(p))$ for all $(p,x) \in D$ and, therefore, $G(p,x) + \alpha d(x,C(p)) \ge G({p^0},{x^0}) + \alpha d({x^0},C({p^0})) = G({p^0},{x^0})$.
	
	Since R-regularity for $S$ implies R-regularity for the set-valued $p\rightrightarrows C(p)$,  there exists a neighbourhood $V(p{^0},{x^0})$ such that $G(p,x) + \alpha M\max \{ 0,{h_0}(p,x)\}  \ge G({p^0},{x^0})$ for all  $(p,x) \in D \cap V(p{^0},{x^0})$. The last inequality is equivalent to the assertion of the theorem with ${\mu _0} = {l_0}M$.
\end{proof}

\bibliographystyle{plain}
\bibliography{references}
\end{document}